\newtheorem{theorem}{Theorem}
\newtheorem{example}[theorem]{Example}
\begin{document}

\begin{frontmatter}

%% Title, authors and addresses

%% use the tnoteref command within \title for footnotes;
%% use the tnotetext command for theassociated footnote;
%% use the fnref command within \author or \address for footnotes;
%% use the fntext command for theassociated footnote;
%% use the corref command within \author for corresponding author footnotes;
%% use the cortext command for theassociated footnote;
%% use the ead command for the email address,
%% and the form \ead[url] for the home page:
%% \title{Title\tnoteref{label1}}
%% \tnotetext[label1]{}
%% \author{Name\corref{cor1}\fnref{label2}}
%% \ead{email address}
%% \ead[url]{home page}
%% \fntext[label2]{}
%% \cortext[cor1]{}
%% \address{Address\fnref{label3}}
%% \fntext[label3]{}

%\title{Radical Parametrization of the Sextic Zolotarev Polynomial}
\title{ An explicit univariate and radical parametrization of the sextic proper Zolotarev polynomials}

%% use optional labels to link authors explicitly to addresses:
%% \author[label1,label2]{}
%% \address[label1]{}
%% \address[label2]{}

\author[1]{Heinz-Joachim Rack}
\author[2]{Robert Vajda\corref{cor1} \fnref{fn1}}
\cortext[cor1]{Corresponding author}
\fntext[fn1]{Supported by the projects NKFI KH 125628 and 20391-3/2018/FEKUSTRAT UNKP.}

\address[1]{Steubenstrasse 26 a, 58097 Hagen, Germany}
\address[2]{Bolyai Institute, University of Szeged, Aradi Vertanuk tere 1, 6720 Szeged, Hungary}

\begin{abstract}
%% Text of abstract
%The $n$-th Chebyshev polynomial of the first kind has its origin in the classical approximation problem of P.L. Chebyshev (1854): 
%To determine a from zero least-deviating polynomial on $[-1,1]$ whose leading coefficient is prescribed. 
%Chebyshev himself posed the analogous but much harder approximation problem to his student E.I. Zolotarev: 
%to assume that even the first two leading coefficients are prescribed. 
%Zolotarev solved this problem in 1868 with the aid of elliptic functions. 
%The normalized Zolotarev polynomials (of degree $n$ and with uniform norm $1$) which arise from this extremal problem 
%subdivide into the (trivial) improper and the (crucial) proper type, depending on the 
%constellation of the two fixed leading coefficients. 
The problem to determine an explicit one-parameter power form representation 
of the proper Zolotarev polynomials of degree $n$ and with uniform norm $1$
on $[-1,1]$ can be traced back to P. L. Chebyshev.
It turned out to be complicated, even for 
small values of $n$. Such a representation was known to A. A. Markov (1889) for $n=2$ and $n=3$. \emph{But already
for $n=4$ it seems that nobody really believed that an explicit form can be found. 
As a matter of fact it was, by V. A. Markov in 1892}, as A. Shadrin put it in 2004. 
About 125 years passed before an explicit form for the next higher degree, $n=5$, was found, by G. Grasegger and N. Th. Vo (2017). 
In this paper we settle the case $n=6$.
\end{abstract}

\begin{keyword}
%% keywords here, in the form: keyword \sep keyword
Abel-Pell differential equation \sep
explicit power form representation \sep  
Peherstorfer-Schiefermayr system of nonlinear equations \sep
polynomial of degree six \sep 
proper Zolotarev polynomial \sep
radical parametrization  
%% PACS codes here, in the form: \PACS code \sep code

%% MSC codes here, in the form: \MSC code \sep code
%% or \MSC[2008] code \sep code (2000 is the default)
\MSC 41A10 \sep 41A29 \sep 41A50

\end{keyword}

\end{frontmatter}

\section{Introduction and historical remarks}\label{sectintro}

%Let $\boldsymbol{I}=[-1,1]\subset \mathbb{R}$ denote the unit interval and let $T_n$ denote the Chebyshev polynomial of the first kind of degree $n$.
Chebyshev's extremal problem (CEP) of 1854 \cite{Chebyshev} is to determine among all monic polynomials of fixed degree $n\ge1$, given by
\begin{equation}\label{degngeneral} 
\tilde{P}_{n}(x)=\sum_{k=0}^{n-1} a_{k,n}x^{k}+x^n,
\end{equation}
where $a_{k,n}\in{\mathbb R}$ are arbitrary coefficients (and $a_{n,n}=1$), 
that particular one which deviates least from the zero-function on $\boldsymbol{I}=[-1,1]\subset{\mathbb R}$ measured in the uniform norm $||.||_{\infty}$.
Chebyshev found that the solution is given on $\boldsymbol{I}$ as follows:
\begin{equation}\label{Chebn}
\tilde{T}_n(x)=2^{1-n}T_n(x)=\sum_{k=0}^{n-1} a^{*}_{k,n}x^{k}+x^n=2^{1-n}\cos(n \arccos(x)),
\end{equation}
with least deviation $2^{1-n}$, known optimal coefficients $a^{*}_{k,n}$, and $T_n$ with $||T_n||_{\infty}=1$ denoting
the $n$-th Chebyshev polynomial of the first kind with respect to $\boldsymbol{I}$, 
see \cite[p. 384]{Milovanovic} or
 \cite[p. 6, p. 67]{Rivlin74} for details.

In 1867 Chebyshev himself proposed to his student E. I. Zolotarev, 
see \cite[p. 2]{Zolotarev1877}, an extension of CEP by requiring that not only the first but also the second leading coefficient,
$a_{n-1,n}$, is to be kept fixed. This extended CEP, which was later renamed as Zolotarev's first problem (ZFP), can be stated as follows: 

\emph{To determine among all monic polynomials of fixed degree $n\ge2$, represented as}
\begin{equation}\label{degnmonic}
\tilde{P}_{n,s}(x)=\sum_{k=0}^{n-2} a_{k,n}x^{k}+(-n s)x^{n-1}+x^n
\end{equation}
\emph{where $s\in\mathbb{R}\backslash\{0\}$ is prescribed,
that particular one, call it $\tilde{Z}_{n,s}$, with}
\begin{equation}\label{dnZol1}
\tilde{Z}_{n,s}(x)=\sum_{k=0}^{n-2} a^{*}_{k,n}(s)x^{k}+(-n s)x^{n-1}+x^n,
\end{equation}
\emph{which deviates least from the zero-function on $\boldsymbol{I}$ in the uniform norm $||.||_{\infty}$}.

Or put alternatively, the goal is to determine the best uniform approximation on $\boldsymbol{I}$ to $f(x,s)=(-n s)x^{n-1}+x^n$ 
by polynomials of degree $<n-1$.

It is well-known that one may restrict the parameter $s$ to $s>0$, and that for $0<s\le\tan^2\left(\pi/(2n)\right)$
the solution $\tilde{Z}_{n,s}$ is given by a distorted Chebyshev polynomial (see e.g. \cite[p. 16]{Achieser98}, \cite[p. 57]{Achieser03}, \cite{Carlson83}, 
\cite[p. 405]{Milovanovic} for details), 
and is called an \emph{improper monic Zolotarev polynomial}. 

However, for $s>\tan^2\left(\pi/(2n)\right)$,  the solution $\tilde{Z}_{n,s}$ to ZFP 
is considered as \emph{very complicated} (see e.g. \cite{Carlson83}, \cite[p. 407]{Milovanovic}, \cite{Peherstorfer91}) 
or even as \emph{mysterious} \cite{Todd84}, and is called \emph{a proper} \cite[p. 160]{Voronovskaja70}, or \emph{hard-core} \cite{Rivlin75} 
\emph{monic Zolotarev polynomial}. Here we shall consider only the cases $s>\tan^2(\pi/(2n))$, noting that
$0<\tan^2\left(\pi/(2n)\right)<1$ holds for $n>2$.
They find application (after rescaling) e.g. in the proof of the Markov inequality \cite{AAMarkov1889}, \cite{VAMarkov} and
Landau-Kolmogorov inequality \cite{Shadrin14},
in the proof of Schur's Markov-type problem \cite{Erdos42}, \cite{Rack17a}, \cite{Rack17b} and 
in the problem of maximizing linear coefficient functionals \cite{Rack89}.

Zolotarev provided a solution to ZFP in 1868 \cite{Zolotarev1868}, and in a reworked form in 1877 \cite{Zolotarev1877}, 
where he was considering altogether four extremal problems, of which ZFP was the first in the row (hence the name). 
%Much to the surprise of his contemporaries, as well as of today's students, 
Surprisingly, Zolotarev presented the proper monic $\tilde{Z}_{n,s}$ in terms of elliptic functions 
(see e.g. \cite[p. 18]{Achieser98}, \cite[p. 280]{Achieser03}, \cite{Carlson83}, \cite{Erdos42}, \cite[p. 407]{Milovanovic},
\cite{Peherstorfer04})
rather than, as is suggested by the task, in the power form (\ref{dnZol1}) with optimal coefficients $a^{*}_{k,n}(s)$. When compared to the two-fold solution (\ref{Chebn}) of CEP, Zolotarev's \emph{unwieldy} \cite[p. 118]{Tikhomirov} elliptic 
(or transcendental) solution of ZFP would correspond to the trigonometric right-hand solution in (\ref{Chebn}) 
without providing an equivalent algebraic left-hand term, see also \cite[p. 38]{Goncharov}.
The following statement by A. A. Markov \cite[p. 264]{AAMarkov1906} 
indicates a reservation about Zolotarev's elliptic solution: \emph{Being based on the application of elliptic functions, 
Zolotarev's solution is too complicated to be useful in practice}.

It is tempting to derive an explicit algebraic solution for the proper $\tilde{Z}_{n,s}$ from the elliptic solution. 
However, even for the first reasonable polynomial degree $n=2$ this path turns out be unexpectedly complicated, see \cite{Carlson83} 
for details. 
Therefore, alternative solution paths have been pursued to determine the proper $\tilde{Z}_{n,s}$. 
For example, \emph{A. A. Markov himself tried to employ the theory of continued fractions in order to find an algebraic solution [to ZFP],
but he was not fully successful, because an algebraic solution requires an amazing amount of calculations}, as is remarked in 
\cite[p. 932]{Malyshev02}. 

In 2004 Shadrin \cite{Shadrin04} wrote: \emph{Recently, the interest in an explicit algebraic solution of ZFP was revived in the papers
 Malyshev {\rm \cite{Malyshev03}}, Peherstorfer {\rm \cite{Peherstorfer90}}, Sodin-Yuditskii {\rm \cite{Yuditskii95}}, but it is only Malyshev who demonstrates how his theory can be applied to some explicit constructions 
for particular $n$.} 
But actually Malyshev \cite{Malyshev03}, see also \cite{Malyshev02}, provided explicit constructions only for
$2\le n\le 5$. Inspired by \cite{Malyshev03} we have provided in a recent paper \cite{Rack18} explicit 
algebraic solutions to ZFP for $6\le n\le11$ in terms of roots of dedicated polynomials by modifying results 
from \cite{Schiefermayr07} and utilizing computer algebra methods which are implemented in the software  
\emph{Mathematica\texttrademark} \cite{MMA1}.
The provision of a solution to ZFP for $n>5$ via computer algebra had been stated as an open problem in \cite{Kaltofen}.
Based on an advanced computer algebra strategy, in the conference paper \cite{Lazard} 
it is claimed to have algebraically solved ZFP even for $6\le n\le 12$. But we do not share this view, since the theoretical strategy 
in \cite{Lazard} appears not granulated finely enough for the purpose of enabling the construction of
$\tilde{Z}_{n,s}$ for a given $n$ and $s$, the more so as neither concrete examples nor a solution formula are provided. 
But we leave it to the reader to form an opinion. 

The mentioned algebraic solutions to ZFP do not meet the demand, which has been vibrant from the outset, 
for a description of the solution to ZFP which avoids elliptic functions and is represented as in (\ref{dnZol1}) 
analytically and explicitly in a power form (with coefficients which depend on a single parameter).
In answering the open problem which we have addressed in \cite[Remark 7]{Rack18} we are now able to show
that for $n=6$ such a parametrization of the coefficients of proper Zolotarev polynomials is in fact
a radical (and not a rational) one. We are going to provide it explicitly in Section \ref{sec6} below.

\section{Explicit analytical one-parameter power form representation of the normalized proper 
   Zolotarev polynomials of degree $n\le5$}\label{sec25}

If $L=L(n,s)>0$ denotes the deviation from zero of $\tilde{Z}_{n,s}$ on $\boldsymbol I$ (which is minimal compared to all polynomials of form 
(\ref{degnmonic})), then the scaled proper Zolotarev polynomial $\tilde{Z}_{n,s}/L$ clearly has uniform norm $1$ on $\boldsymbol I$. 
Proper Zolotarev polynomials with uniform norm $1$ will be called \emph{normalized}.

Such polynomials of degree $2\le n\le4$ and represented in a power form 
are scattered in the literature, see \cite{Carlson83}, \cite{Collins96}, \cite{Grasegger17},\cite[p. 156]{Paszkowski62}, 
\cite{Rack89}, \cite{Rack17a}, \cite{Rack17b}, \cite{Shadrin04} and \cite[p. 98]{Voronovskaja70}
(the latter with respect to $[0,1]$). 
They can be expressed (possibly after some rearrangements) analytically as 
\begin{equation}\label{parformnn}
Z_{n,t}(x)=\sum_{k=0}^{n} b_{k,n}(t) x^k, \quad {\rm with}\quad 0\neq b_{n,n}(t) \quad {\rm and\,} \quad t\in I_n,
\end{equation}
where the explicit coefficients $b_{k,n}(t)$ depend on a parameter $t$, and $I_n$ 
denotes a dedicated (finite, if $n>2$) open parameter interval.
As is addressed in the Abstract, the cases $n=2$ and $n=3$ are contained already in A. A. Markov (1889) \cite{AAMarkov1889}.
The case $n=4$, 
appearing in the form (\ref{parformnn}) in \cite{Grasegger17}, \cite{Rack89} and in \cite{Shadrin04}, 
deserves special attention: Shadrin \cite[p. 10]{Shadrin04} attributes it, 
see the Abstract, to V. A. Markov (1892) \cite{VAMarkov} 
(more precisely, as communicated privately to the first-named author, to a passage on p. 73 in \cite{VAMarkov} 
which is not contained in the abridged German translation \cite{VAMarkovG} of \cite{VAMarkov}, see also \cite[p. 160]{Rack17a}).  
Shadrin refers two times to the fact that representations (\ref{parformnn}) are available only for three values of $n$: 
\emph{explicit expressions... are known only for $n=2,3,4$} \cite[p. 10]{Shadrin04} 
and \emph{there is no explicit expression for [normalized proper] Zolotarev polynomials of degree $n>4$} \cite[p. 1185]{Shadrin14}.

It took about 125 years before a normalized proper Zolotarev polynomial of
the next higher degree, $n=5$, had been found in the desired form (\ref{parformnn}), see Grasegger and Vo (2017) \cite{Grasegger17}. 
Partial results for $n=5$ appeared earlier in \cite{Collins96} (for a correction see \cite[p. 73]{Rack17b}) 
and in \cite[p. 937]{Malyshev02}. 
In the next Section, we are going to reveal the case $n=6$.

For the sake of definiteness we shall assume, without loss of generality, 
that a normalized proper Zolotarev polynomial $Z_{n,t}$ in the form (\ref{parformnn}) satisfies certain definite conditions 
which follow from its intrinsic properties, see e.g. \cite{Achieser98}, \cite{Achieser03}:  $Z_{n,t}$ must equioscillate
$n$  times on $\boldsymbol{I}$ and, additionally, two times on some interval $[\alpha,\beta]$. 
We assume here that the following holds: $Z_{n,t}(-1)=(-1)^n, Z_{n,t}(1)=-1$, $Z_{n,t}(\alpha)=-1$, $Z_{n,t}(\beta)=1$,
where $1<\alpha<\beta$ and $||Z_{n,t}||_{\infty}=1$ for $x\in \boldsymbol I$ and $x\in[\alpha,\beta]$. 
In literature both $Z_{n,t}$ and $-Z_{n,t}$ are considered interchangeably as normalized proper Zolotarev polynomials. 
Less frequently the two polynomials defined by $\pm Z_{n,t}(-x)$ go by this name, in which case the two additional equioscillation points 
would be situated to the left of $\boldsymbol{I}$.

To deduce, for a given $s>\tan^2(\pi/(2n))$, from (\ref{parformnn}) the monic proper Zolotarev polynomial $\tilde{Z}_{n,s}$, 
one may proceed as follows: Divide (\ref{parformnn}) by $b_{n,n}(t)$ yielding
$\sum_{k=0}^{n-1} c_{k,n}(t)x^k+x^n$, then equate $c_{n-1,n}(t)$ with $(-ns)$ and solve for $t$, and finally insert the solution
$t=t^{*} \in I_n$ into $\sum_{k=0}^{n-1} c_{k,n}(t) x^k+x^n$ to get $\tilde{Z}_{n,s}$, 
see also \cite[Theorem 3]{Erdos42}. 
An example of such a deduction, for $n=5$ and $s=2$, is given in \cite[Section 5]{Rack17b}.
In anticipation of a result of the next Section, we mention that for $n=6$ there is exactly one instance where a normalized proper Zolotarev 
polynomial of form (\ref{parformnn}) is already monic: if $L=1$ holds, and this is the case if $t=-0.0003253\dots$, see Formula (\ref{Lt}) below.

\section{Explicit analytical one-parameter power form representation of the normalized proper 
    Zolotarev polynomials of degree $n=6$}\label{sec6}

Our main result is the representation of the family of normalized proper Zolotarev polynomials of degree $6$ 
in the parameterized power form (\ref{parformnn}). 
The parametrization for the cases $2\le n\le 4$ is a rational one, see \cite{Grasegger17} and \cite{Rack17b}, whereas for the case $n=5$ it is a radical one, 
and it also turns out to be so for the case $n=6$, see the even-indexed coefficients in Theorem \ref{mainthm} below.
We have achieved our result by using symbolic computation (Quantifier Elimination, Cylindrical Algebraic Decomposition, Groebner Basis) 
as implemented in \emph{Mathematica} and by using the Algebraic Curve Package \emph{algcurves} in \emph{Maple\texttrademark} \cite{maple}. 
However, it would be too bulky to reproduce here all the computational steps of our proof, which is similar to, but more complex, 
than the proof for $n=5$ in \cite{Grasegger17}. 
Therefore, we proceed as in \cite[Section 5]{Carlson83}: 
We give a proof in the nature of a verification, that is, we write down the sought-for family of polynomials in the one-parameter 
power form (\ref{parformnn}) and then we verify that they are indeed (sextic) normalized proper Zolotarev polynomials by checking 
that they satisfy the defining properties of such polynomials, see e.g. \cite{Achieser98}, \cite{Achieser03},
\cite{Carlson83}, \cite{Erdos42}, \cite{Peherstorfer99}, \cite{Shadrin04}. 
In particular, these properties are: existence of 6 equioscillation points on $\boldsymbol{I}$ (including the endpoints), 
existence of 3 points $\gamma<\alpha<\beta$ to the right of $\boldsymbol{I}$ where at $\gamma$ 
the first derivative vanishes 
and where $\alpha$ and $\beta$ are two further equioscillation points, 
solution of the Abel-Pell differential equation, 
solution of the Peherstorfer-Schiefermayr nonlinear system of equations, 
coincidence (for $n=6$) with known general limiting values when the parameter $t$ 
tends to the boundaries of the parameter interval. 

\begin{theorem}\label{mainthm}
Let $t$ denote a real parameter from the finite open parameter interval
\begin{equation}\label{inti6}
I_6=\left(\frac{1}{2}(5-3\sqrt3),0\right), \,{\rm with\,\,}  \frac{1}{2}(5-3\sqrt3)=-0.09807\dots\,,
\end{equation} and let $\omega=\omega(t)$ denote the radical expression $\sqrt{(-1+t)t(1+t+7t^2)}$.
For every $t\in I_6$ the sextic algebraic polynomial $Z_{6,t}$ in $x$, with
\begin{equation}\label{parform6}
Z_{6,t}(x)=\sum_{k=0}^{6} b_{k,6}(t)x^k,
\end{equation} 
is a normalized proper Zolotarev polynomial of degree $n=6$ on $\boldsymbol{I}$. The parameterized coefficients $b_{k,6}(t)$ are given by
\begin{flalign}\label{b06}
%\begin{equation}\label{b06}
b_{0,6}(t)=&\frac{2 \sqrt{3} (-1 + t)^2 \omega}{(1 + 2 t)^5 (-1 + 4 t)^3 (1 - 2 t + 10 t^2)^4}\times&&\\
&\big(1 - 6 t + 18 t^2 - 16 t^3 - 252 t^4 + 2592 t^5 - 5844 t^6 +  20448 t^7 -&&\nonumber\\ 
&15768 t^8 - 219280 t^9 + 942576 t^{10} - 893232 t^{11} + 2825968 t^{12}\big)\nonumber&&
%\end{equation}
\end{flalign}
\begin{flalign}\label{b16}
%\begin{split}
b_{1,6}(t)=&
 \frac{(-5+6t-24t^2-4t^3)}{(1-4t)^2(1+2t)^5(1-2t+10t^2)^4} \times&&\\
&\big(1 - 12 t^2 + 116 t^3 - 756 t^4 + 2520 t^5 + 1212 t^6 - 12744 t^7 +&&\nonumber\\ 
&  69840 t^8 - 309280 t^9 + 700704 t^{10} - 709008 t^{11} + 788848 t^{12}\big)\nonumber&&
%\end{split}
\end{flalign}
\begin{flalign}\label{b26}
b_{2,6}(t)=&\frac{2 \sqrt{3} (-1 + t)^2\omega}{(1 + 2 t)^5 (1 - 4 t)^3 (1 - 2 t + 10 t^2)^4}\times&&\\
&\big(13 - 102 t + 390 t^2 - 880 t^3 - 288 t^4 + 19296 t^5 -102792 t^6 +&&\nonumber\\ 
&  390816 t^7 - 939024 t^8 + 1167536 t^9 - 258720 t^{10} - 339888 t^{11} +&&\nonumber\\ 
&  2720848 t^{12}\big)\nonumber&&
\end{flalign}
\begin{flalign}\label{b36}
%\begin{split}
b_{3,6}(t)=&
\frac{-4(-1+t)^5}{(1-4t)^2(1+6t^2+20t^3)^4}\times&&\\
&\big(5 + 3 t - 6 t^2 + 564 t^3 - 3408 t^4 + 13296 t^5 - 35136 t^6 +&&\nonumber\\
&  
 107976 t^7 - 130416 t^8 + 243952 t^9\big)\nonumber&&
%\end{split}
\end{flalign}
\begin{flalign}\label{b46}
b_{4,6}(t)=&\frac{8 \sqrt{3} (1 - t)^7 \omega}{(1 + 2 t)^5 (-1 + 4 t)^3 (1 - 2 t + 10 t^2)^4}\times&&\\
& \big(7 - 25 t + 66 t^2 - 146 t^3 - 64 t^4 + 2580 t^5 - 6800 t^6 + 26252 t^7\big) \nonumber&&
\end{flalign}
\begin{flalign}\label{b56}
%\begin{split}
b_{5,6}(t)=
&\frac{-16(-1 + t)^{10} (1 + t + 7 t^2) (1 + 6 t + 12 t^2 + 116 t^3)}{(1 + 2 t)^5 (1- 4 t)^2 (1 - 2 t + 10 t^2)^4}&&
%\end{split}
\end{flalign}
\begin{flalign}\label{b66} 
%\begin{split}
b_{6,6}(t)=
&\frac{-32 \sqrt{3} (-1 + t)^{12}  (1 + t + 7 t^2) \omega}{(1 + 2 t)^5 (-1 + 4 t)^3 (1 - 2 t + 10 t^2)^4}.&&
%\end{split}
\end{flalign}
We note that $b_{0,6}(t)=-(b_{2,6}(t)+b_{4,6}(t)+b_{6,6}(t))$ and $b_{1,6}(t)=-(1+b_{3,6}(t)+b_{5,6}(t))$ holds.

The connection to $\tilde{Z}_{n,s}$, the monic proper Zolotarev polynomial of degree $n=6$, see {\rm (\ref{dnZol1})}, 
is established via the equation
\begin{flalign}\label{st}
s=s(t)=\frac{(1 - 4 t) (1 + 6 t + 12 t^2 + 116 t^3)\omega}{12 \sqrt{3} (-1 + t)^3 t (1 + t + 7 t^2)}&&
\end{flalign}
and via the representation of the (least) deviation of $\tilde{Z}_{n,s}$ from zero on $\boldsymbol I$,
\begin{flalign}\label{Lt}
L=L(6,s)=L(t)=\frac{(1 - 4 t)^3 (1 + 2 t)^5 (1 - 2 t + 10 t^2)^4 \omega}{32 \sqrt{3} (-1 + t)^{13} t (1 + t + 7 t^2)^2}.&&
\end{flalign}\qed
\end{theorem}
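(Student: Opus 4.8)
The plan is to follow exactly the verification strategy announced before the theorem: take the explicit family $Z_{6,t}$ as given by \eqref{parform6}--\eqref{b66} and show directly that for each $t\in I_6$ it has all the defining properties of a sextic normalized proper Zolotarev polynomial. First I would record the two linear identities $b_{0,6}(t)=-(b_{2,6}(t)+b_{4,6}(t)+b_{6,6}(t))$ and $b_{1,6}(t)=-(1+b_{3,6}(t)+b_{5,6}(t))$ stated in the theorem; adding them gives $Z_{6,t}(1)=-1$, and the parity of the factors gives $Z_{6,t}(-1)=1=(-1)^6$. Next I would exhibit the critical-point structure on $\boldsymbol I$. The honest way is to produce, as an explicit rational (or radical-in-$\omega$) function of $t$, the four interior extremal abscissae $-1<x_1<x_2<x_3<x_4<1$ together with the three abscissae $1<\gamma<\alpha<\beta$, and verify the identities $Z_{6,t}(x_i)=(-1)^{i+1}$, $Z_{6,t}'(x_i)=0$, $Z_{6,t}'(\gamma)=0$, $Z_{6,t}(\alpha)=-1$, $Z_{6,t}(\beta)=1$. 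Concretely one forms the discriminant-type factorization $1-Z_{6,t}(x)^2=(1-x^2)\,Q(x)^2\,R(x)$ and $Z_{6,t}'(x)^2$ divides $(1-x^2)Q(x)^2$-style expressions; matching $Z_{6,t}'(x)$ against the known product form $c\,(x-\gamma)\prod_{i=1}^4(x-x_i)$ pins everything down. This is the step I expect to be the main obstacle, because it requires certifying that those seven algebraic numbers are real, distinct, and correctly ordered for every $t$ in the stated interval, which ultimately rests on a sign analysis (sturm sequences / CAD over $I_6$) of several high-degree polynomials in $t$ whose nonvanishing and sign on $I_6$ must be checked --- precisely the kind of bookkeeping the authors relegate to \emph{Mathematica}.

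With the equioscillation data in hand, the remaining properties are comparatively mechanical. I would verify that $Z_{6,t}$ solves the Abel--Pell differential equation
\begin{equation*}
\bigl(1-Z_{6,t}(x)^2\bigr)\bigl(A(x)\bigr)^2 = \bigl(1-x^2\bigr)\bigl(x-\gamma\bigr)^2\bigl(Z_{6,t}'(x)\bigr)^2 \Big/ B(x),
\end{equation*}
or in the cleaner Pell form $P(x)^2 - D(x)\,Q(x)^2 = \mathrm{const}$ with $D$ a fixed quartic built from the endpoints $\pm1,\alpha,\beta$; since both sides are polynomials in $x$ of bounded degree with coefficients rational in $t$ and $\omega$, this reduces to a finite set of coefficient identities, each checkable by substitution and simplification using $\omega^2=(-1+t)t(1+t+7t^2)$. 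Likewise the Peherstorfer--Schiefermayr nonlinear system --- which encodes exactly the ``$n$ equioscillations on $\boldsymbol I$ plus two on $[\alpha,\beta]$'' normalization --- becomes, once the node abscissae are substituted, a list of polynomial identities in $t,\omega$ that I would verify term by term.

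Finally I would treat the boundary behaviour as $t\to 0^-$ and $t\to \tfrac12(5-3\sqrt3)^+$: as $t\to 0^-$ one should recover (after the appropriate rescaling) the degenerate/limiting Zolotarev configuration where the extra interval $[\alpha,\beta]$ merges with $\boldsymbol I$ (equivalently $s\to\tan^2(\pi/12)^+$ via \eqref{st}), and at the other endpoint one should recover the other known limiting polynomial; checking these amounts to computing limits of the explicit $b_{k,6}(t)$ and of $s(t)$, $L(t)$ and matching them against the tabulated boundary values, which also confirms that $s(t)$ is a bijection from $I_6$ onto $(\tan^2(\pi/12),\infty)$ so that every proper degree-$6$ case is hit exactly once. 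The identities \eqref{st} and \eqref{Lt} themselves are then verified by dividing $Z_{6,t}$ by its leading coefficient $b_{6,6}(t)$, reading off the penultimate coefficient, equating it with $-6s$, and identifying $L(t)=1/|b_{6,6}(t)|\cdot(\text{norm factor})$; both are single rational-in-$(t,\omega)$ identities.
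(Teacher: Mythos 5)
Your proposal follows essentially the same route as the paper's own proof: a verification argument that exhibits the equioscillation abscissae $z_1,\dots,z_4,\gamma,\alpha,\beta$ as explicit radical expressions in $t$ and $\omega$, checks the values of $Z_{6,t}$ and $Z_{6,t}'$ there, confirms the Abel--Pell differential equation and the Peherstorfer--Schiefermayr system, examines the limits as $t\to 0$ and $t\to\tfrac12(5-3\sqrt3)$, and recovers $s(t)$ and $L(t)$ by normalizing the leading coefficient --- all delegated to computer algebra, exactly as the authors do. The only (harmless) additions are your explicit remark on the bijectivity of $s(t)$ and the ``Pell form'' rephrasing of the differential equation.
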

\begin{proof}
One may first verify that $Z_{6,t}$ and its first derivative $Z'_{6,t}$ attain dedicated values $y\in\{-1,0,1\}$ at selected points
$x\in \{-1, 1, \alpha, \beta, \gamma, z_1, z_2, z_3, z_4\}$, due to the intrinsic structure of normalized proper Zolotarev polynomials:
\begin{equation}\label{equioscpoints}
Z_{6,t}(-1)=1,\, Z_{6,t}(1)=-1, \, Z'_{6,t}(\gamma)=0, \, Z_{6,t}(\alpha)=-1,\, Z_{6,t}(\beta)=1,
\end{equation}
where
\begin{equation}\label{gammat}
\gamma=\gamma(t)=\frac{(1-4t)(5-6t+24t^2+4t^3)}{12\sqrt{3}(-1+t)^2\omega}
\end{equation}
\begin{equation}\label{alphat}
\alpha=\alpha(t)=\frac{-9t^2}{(-1+t)^2}+\frac{(1+2t)(1-4t)(1-2t+10t^2)}{2\sqrt{3}(-1+t)^2\omega}
\end{equation}
\begin{equation}\label{betat}
\begin{split}
\beta=&\beta(t)=\frac{9t^2}{(-1+t)^2}+\frac{(1+2t)(1-4t)(1-2t+10t^2)}{2\sqrt{3}(-1+t)^2\omega}\\
=&\frac{18t^2}{(-1+t)^2}+\alpha.
\end{split}
\end{equation}
We note that $\gamma=(\alpha+\beta)/2-s$ holds, see \cite[p. 2486]{Yuditskii95}.
Denote the $4$ inner equioscillation points of $Z_{6,t}$ on $\boldsymbol{I}$ as  $z_1<z_2<z_3<z_4$.
One may then verify that they are given, together with the associated values of $Z_{6,t}$ and of $Z'_{6,t}$, as follows:
\begin{equation}\label{z1t}
z_1=z_1(t)=A-B {\rm \,\,with\,\,} Z_{6,t}(z_1)=-1 {\rm \,\,and\,\,} Z_{6,t}'(z_1)=0, {\,\,\rm where}
\end{equation}
\begin{equation}
A=A(t)=\frac{(-1+4t)((1+2t)-\frac{\sqrt{3}}{\omega} t (1 + t + 16 t^2))}{4(-1+t)^2},
\end{equation}
\begin{equation}
\begin{split}
B=B(t)=&\frac{(1+2t)}{4(-1+t)^2}\times\\
& \sqrt{\frac{2\sqrt{3}\omega (1+2t)(-1+4t)+(5-26t+102 t^2 - 200 t^3 + 524 t^4)}{1+t+7t^2}};
\end{split}
\end{equation}
\begin{equation}\label{z2t}
z_2=z_2(t)=C-D {\rm \,\,with\,\,} Z_{6,t}(z_2)=1 {\rm \,\,and\,\,} Z_{6,t}'(z_2)=0, {\,\,\rm where}
\end{equation}
\begin{equation}
C=C(t)=\frac{(1-4t)((1+2t)+\frac{\sqrt{3}}{\omega} t(1+t+16t^2))}{4(-1+t)^2},
\end{equation}
\begin{equation}
\begin{split}
D=D(t)=&\frac{(1+2t)}{4(-1+t)^2}\times\\
& \sqrt{\frac{-2\sqrt{3}\omega (1 + 2 t) (-1 + 4 t)+(5-26t+102 t^2 - 200 t^3 + 524 t^4)}{1+t+7t^2}};
\end{split}
\end{equation}
\begin{equation}\label{z3t}
z_3=z_3(t)=A+B {\rm \,\,with\,\,} Z_{6,t}(z_3)=-1 {\rm \,\,and\,\,} Z_{6,t}'(z_3)=0;
\end{equation}
\begin{equation}\label{z4t}
z_4=z_4(t)=C+D {\rm \,\,with\,\,} Z_{6,t}(z_4)=1 {\rm \,\,and\,\,} Z_{6,t}'(z_4)=0.
\end{equation}
One may furthermore verify that the polynomial $Z_{6,t}$ satisfies the Abel-Pell differential equation, which for $n=6$ reads, 
see e.g. \cite[p. 17]{Achieser98}, \cite{Bogatyrev}, \cite[p. 10]{Shadrin04},
\begin{equation}\label{DE6a}
\frac{(1-x^2)(x-\alpha)(x-\beta) (Z_{6,t}'(x))^2}{36 (x-\gamma)^2} =1-(Z_{6,t}(x))^2. 
\end{equation}
Next, one may verify that the equioscillation points of the polynomial $Z_{6,t}$ satisfy the Peherstorfer-Schiefermayr system of nonlinear equations 
which, for $n=6$, reads, see \cite[Lemma 2.1 and p. 68]{Peherstorfer99}, \cite[Lemma 1]{Schiefermayr07}:
\begin{equation}
\alpha+\beta+2(z_1+z_2+z_3+z_4)-\frac{(1-4t)(1+6t+12t^2+116t^3)}{\sqrt{3}(-1+t)^2\omega}=0.
\end{equation}
\begin{equation}
-1+(-1)^k+2(-z_1^k+z_2^k-z_3^k+z_4^k)-\alpha^k+\beta^k=0 \,\,{\rm for\,\,} k=1,2,3,4,5.
\end{equation}
Its validity implies two alternative representations of $Z_{6,t}$, see \cite[p. 150]{Schiefermayr07}: 
\begin{align}
Z_{6,t}(x)=& 1-\frac{2(x+1)(x-\beta)(x-z_2)^2(x-z_4)^2}{(\alpha+1)(\alpha-\beta)(\alpha-z_2)^2(\alpha-z_4)^2}\\
=&-1+\frac{(x-\alpha)(x-1)(x-z_1)^2(x-z_3)^2}{(1+\alpha)(1+z_1)^2(1+z_3)^2}.\label{twoalternative}
\end{align}
We note that the denominator in (\ref{twoalternative}) can be rewritten as $(\beta-1)(\beta-\alpha)(\beta-z_1)^2(\beta-z_3)^2/2$.

Finally, one may verify that the limiting behavior of $Z_{6,t}$ when $t$ tends towards $0$ and towards $(5-3\sqrt3)/2$ is, see 
\cite[p. 19]{Achieser98} and \cite[pp. 247-248]{Lebedev94}:
\begin{equation}\label{lim0}
\lim_{t\to0} Z_{6,t}(x)=-T_5(x),\quad {\rm where\,\,} T_5(x)=5x-20x^3+16x^5 
\end{equation}
and
\begin{equation}\label{limt0}
\lim_{t\to\frac{1}{2}(5-3\sqrt3)} Z_{6,t}(x)=T_6\left(\frac{(x+1)(2+\sqrt3)}{4}-1\right),
\end{equation}
where $T_6(x)=-1+18x^2-48x^4+32x^6$, 
%\begin{equation}\label{limt0}
%\lim_{t\to\frac{1}{2}(5-3\sqrt3)} Z_{6,t}(x)=\frac{T_6(x \cos^2(\frac{\pi}{12})-\sin^2(\frac{\pi}{12}))}{2^5 \cos^{12}(\frac{\pi}{12})(172928-99840\sqrt3)},
%\end{equation}
and the limiting behavior of $\alpha$ and $\beta$ is:
\begin{equation}
\lim_{t\to0}\alpha(t)=\infty,\, \lim_{t\to\frac{1}{2}(5-3\sqrt{3})}\alpha(t)=1,
\end{equation}
\begin{equation}
\lim_{t\to0}\beta(t)\!=\!\infty,\, \lim_{t\to\frac{1}{2}(5-3\sqrt{3})}\beta(t)\!=\!1+2\tan^2\left(\frac{\pi}{12}\right)\!=\!15-8\sqrt3\!=\!1.14359\dots,
\end{equation}
in accordance with \cite[p. 454]{Erdos42}.

In order to deduce, for $n=6$ and for a given $s>\tan^2(\pi/12)$, from (\ref{parform6}) the monic proper Zolotarev polynomial $\tilde{Z}_{n,s}$
and thus to solve ZFP 
(see the final paragraph of Section \ref{sec25}), we divide (\ref{parform6}) by $b_{6,6}(t)$ so that the first leading coefficient turns into $1$ 
and the second one turns into
\begin{equation}
\frac{b_{5,6}(t)}{b_{6,6}(t)}=\frac{(-1 + 4 t) (1 + 6 t + 12 t^2 + 116 t^3)}{2 \sqrt{3} (-1 + t)^2 \omega}.
\end{equation}
Identifying this term with $-6s$ yields that $s$ is the term as given in {\rm (\ref{st})}. Evaluating $\tilde{Z}_{6,s}$ at $x=-1$ 
yields that the (minimal) deviation $L=L(6,s)$ is the term as given in {\rm (\ref{Lt})}.

All these verifications we have accomplished with the aid of \emph{Mathematica} and have cross-checked the results 
with \emph{Maple}. We leave it to the reader to reverify the above properties with a method of own choice. 
\end{proof}

\begin{example}\label{exarat}
Choosing $t=-1/20=-0.05 \in I_6$ yields 
\begin{align}
%\begin{split}
&\gamma=\frac{3176}{147\sqrt{301}}=1.24531\dots,\\ 
&\alpha=\frac{-301+1200\sqrt{301}}{14749}=1.39116\dots,\\
&\beta=\frac{301+1200\sqrt{301}}{14749}=1.43197\dots\,
%\end{split}
\end{align}
and
%\begin{equation*}
%\begin{split}
%&z_1=\frac{1}{49} \left(-12 - \frac{88}{\sqrt{301}} - 3 \sqrt{\frac{1}{43} (3251 - 24 \sqrt{301})}\right)=-0.84550\dots,\\ 
%&z_2=\frac{1}{98} \left(24 - \frac{176}{\sqrt{301}} - 
%   49 \sqrt{\frac{117036}{103243} + \frac{864}{343 \sqrt{301}}}\right)=-0.42403\dots,\\
%&z_3=\frac{1}{49} \left(-12 - \frac{88}{\sqrt{301}} + 3 \sqrt{\frac{1}{43} (3251 - 24 \sqrt{301})}\right)=0.14868\dots,\\
%&z_4=\frac{1}{98} \left(24 - \frac{176}{\sqrt{301}} + 
%   49 \sqrt{\frac{117036}{103243} + \frac{864}{343 \sqrt{301}}}\right)=0.70680\dots\,.
%\end{split}
%\end{equation*}
\begin{align}
&z_1=\frac{-3612-88\sqrt{301}-21\sqrt{43(3251-24\sqrt{301})}}{14749}=-0.84550\dots,\\ 
&z_2=\frac{3612-88\sqrt{301}-21\sqrt{43(3251+24\sqrt{301})}}{14749}=-0.42403\dots,\\
&z_3=\frac{-3612-88\sqrt{301}+21\sqrt{43(3251-24\sqrt{301})}}{14749}=0.14868\dots,\\
&z_4=\frac{3612-88\sqrt{301}+21\sqrt{43(3251+24\sqrt{301})}}{14749}=0.70680\dots\,.
\end{align}
The corresponding sextic normalized proper Zolotarev polynomial is
\begin{equation}\label{Zolexarat}
\begin{split}
&Z_{6,t=-0.05}(x)=\frac{1}{777600000000}\times\\
&\big(
 -31735420507 \sqrt{301} - 2906886359536 x + 
  452607070657 \sqrt{301} x^2 +\\
& 12429463839072 x^3 -
  1016046999793 \sqrt{301} x^4 -
  10300177479536 x^5 + \\
&  595175349643 \sqrt{301} x^6\big)=\\
& (-0.70806\dots) +(- 3.73827\dots) x +
 (10.09830\dots) x^2 +\\
& (15.98439\dots) x^3 + 
 (- 22.66944\dots) x^4 +(- 13.24611\dots) x^5 +\\
& (13.27920\dots) x^6.
\end{split}
\end{equation}
It is readily seen that it satisfies, for example, the conditions {\rm (\ref{equioscpoints})}, {\rm (\ref{z1t})}, {\rm (\ref{z2t})},
{\rm (\ref{z3t})}, {\rm (\ref{z4t})}.
The graph of $Z_{6,t=-0.05}$, whose uniform norm on $\boldsymbol{I}$ and on $[\alpha, \beta]$ is $1$, is displayed in Figure \ref{figure 1},
where the two vertical lines indicate the interval $[\alpha,\beta]$. \qed
\begin{figure}[h]
\begin{center}
\includegraphics[width=4.0in]{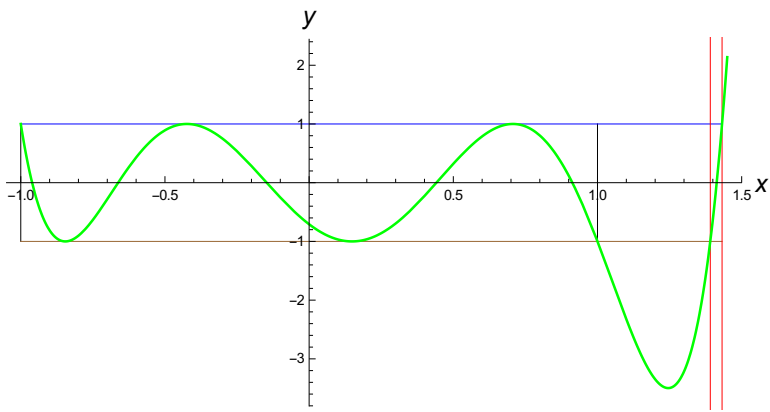}
\end{center}
\caption{$Z_{6,t=-0.05}$}\label{figure 1}
\end{figure}
\end{example}

\begin{example}\label{exas1}
The goal is  to solve ZFP for $n=6$ and, say, $s=1>\tan^2(\pi/12)$ $=7-4\sqrt3=0.07179\dots\,\,$.
To this end, solve equation {\rm(15)} with $s=1$ for the variable $t$ and choose the unique solution
$t=t^{*}=-0.002272...\in I_6$, which is a root of the polynomial 
$1 + 436 x - 1748 x^2 + 5272 x^3 - 15632 x^4 + 24592 x^5 - 12752 x^6 - 
 48416 x^7 + 212272 x^8$. Then insert $t^{*}$ into $Z_{6,t}/b_{6,6}(t)$ in order to get the desired
solution to ZFP, see {\rm (4)}:
\begin{equation}
\begin{split}
&\tilde{Z}_{6,s=1}(x)=(-0.06207\dots)+(-1.86731\dots)x+(0.81036\dots)x^2+\\
&(7.48972\dots)x^3+(-1.74828\dots)x^4+(-6)x^5+x^6.
\end{split}
\end{equation}
The least deviation from zero is $\tilde{Z}_{6,s=1}(-1)=L=L(6,s=1)=-\tilde{Z}_{6,s=1}(1)=0.37758\dots\,\,$.
This solution to ZFP for $n=6$ and $s=1$ coincides with the one which was determined independently in {\rm\cite[Example 2]{Rack18}}.
 \qed
\end{example}

\section{Concluding remarks}

\subsection{The rational side-solution of the sextic Abel-Pell differential equation}
Regrettably, we have to point to a flaw in the paper by Grasegger and 
Vo \cite{Grasegger17} concerning the degree $n=6$: 
The one-parameter power form representation as given there in Section 4.5, and identically given in Section 4.6 (Example 4.1), 
expressed there as $T_3(Z_2(x))$, which is in fact a rational solution of the sextic Abel-Pell differential equation (\ref{DE6a}), does not represent, as is claimed in \cite{Grasegger17}, 
a family of sextic normalized proper Zolotarev polynomials. The reason is that for each parameter $t>1$
the sextic polynomial $T_3(Z_2(x))$ equioscillates less than six times (in fact four times) on $\boldsymbol{I}$. 
Here, $T_3(x)=-3x+4x^3$ and $Z_2(x)=(1+2 t x-x^2)/t^2$ with $t>1$ so that
\begin{equation}
\begin{split}
T_3(Z_2(x))=&-\frac{1}{2t^3} \big((-1+3t^2)+(-6t+6t^3)x+(3-15t^2)x^2+\\
&(12t-8t^3)x^3+(-3+12t^2)x^4+(-6t)x^5+x^6\big).
\end{split}
\end{equation}
Observe that $Z_{2}$ with $t>1$ denotes here (in our notation) the family $-Z_{2,t}$ of negative normalized proper 
Zolotarev polynomials of degree $n=2$, satisfying $-Z_{2,t}(-1)=-1$, see \cite[pp. 2-3]{Carlson83}.
Thus we have to contrast $-T_3(Z_2(x))$ with our solution $Z_{6,t}(x)$ as given in (\ref{parform6}), whereof the disparity 
becomes obvious immediately.
The gap in the proof of Corollary 4.3 in \cite{Grasegger17} is the omission of the check whether the considered polynomials 
equioscillate on $\boldsymbol{I}$ exactly as many times as their degree indicates. 
An underlying fault is a misinterpretation of a result of Lebedev \cite{Lebedev94} which enters into 
Theorem 4.2 in \cite{Grasegger17}. This item has already been pointed to in \cite[Remark 9]{Rack17b}. 

\subsection{Asymptotics for the least deviation $L$}

S. N. Bernstein \cite{Bernstein1913} provided for $n\to\infty$ the following asymptotic approximation, $L_{\infty}$,
to the constant $L$ in (\ref{Lt}):
\begin{equation}\label{Linf}
L_{\infty}=\frac{n s+\sqrt{n^2 s^2+1}}{2^{n-1}}. 
\end{equation}
Already for $n=6$ this approximation is quite formidable as can be concluded from our examples.
 
In Example \ref{exarat} we have $t=-1/20$ and hence by (\ref{Lt}) we get $L=$\\ $\frac{777600000000}{595175349643\sqrt{301}}=0.07530\dots$ (which is the inverse of the leading coefficient 
in (\ref{Zolexarat})).
On the other hand, with $s=s(-1/20)=\frac{424}{147\sqrt{301}}=0.16625\dots$ according to (\ref{st}), we get from (\ref{Linf}) that
$L_{\infty}=\frac{848+\sqrt{1441805}}{1568\sqrt{301}}=0.07531\dots$ holds.

In Example \ref{exas1}, where $s=1$ holds, we have obtained $L=0.37758\dots\,\,$. From (\ref{Linf}) we get 
$L_{\infty}=\frac{1}{32}(6+\sqrt{37})=0.37758\dots\,\,$.  
Using higher precision one sees that this is a match in ten digits after decimal point.

\subsection{Choice of the parameter interval}
In our search for a convenient finite parameter interval, we have stopped after having found, in January 2019, $I_6$ 
as given in (\ref{inti6}), since it resembles $I_5=(\frac{1}{5}(-5+2\sqrt{5}),0)$ as given in \cite[p. 178]{Grasegger17}.
In the mean time we have gotten the hint that simplifications in our above formulas for $\alpha, \beta, \gamma$ 
can be achieved when  $t$ will be replaced by a certain rational transformation of $t\in I_6$.
But we retain here our primal choice $I_6$.

%% \linenumbers

%% main text
%\section{TmpSection}
%In \cite{Carlson83}... $x=3$
%
%\label{} 

%% The Appendices part is started with the command \appendix;
%% appendix sections are then done as normal sections
%% \appendix

%% \section{}
%% \label{}

%% If you have bibdatabase file and want bibtex to generate the
%% bibitems, please use
%%
%%  \bibliographystyle{elsarticle-num} 
%%  \bibliography{<your bibdatabase>}

%% else use the following coding to input the bibitems directly in the
%% TeX file.

\end{document}